\title{Affine Calculus for Constrained Minima of the Kullback-Leibler Divergence}
\author{Giovanni Pistone \href{https://orcid.org/0000-0003-2841-788X}{orcid}}
\address{De Castro Statistics, Collegio Carlo Alberto}
\email{giovanni.pistone@carloalberto.org}
\address{Nuovo SEFIR, c/o Coworld, Centro Direzionale Milano Due, Palazzo Canova, I-20054 Segrate}
\DeclareMathOperator{\Cov}{Cov}
\DeclareMathOperator{\Entropy}{\mathcal H}
\DeclareMathOperator{\Expectation}{\mathbb E}
\DeclareMathOperator{\Grad}{grad}
\DeclareMathOperator{\Hessian}{Hess}
\DeclareMathOperator{\Maxexp}{\mathcal E}
\DeclareMathOperator{\Var}{Var}
\newcommand{\JS}[2]{\operatorname{JS}\left(#1,#2\right)}
\newcommand{\KL}[2]{\operatorname{D}\left(#1\,\middle\Vert#2\right)}
\newcommand{\condexpat}[3]{\Expectation_{#1}\left(#2 \middle| #3\right)}
\newcommand{\covat}[3]{\Cov_{#1}\left(#2,#3\right)}
\newcommand{\derivby}[1]{\frac{d}{d#1}}
\newcommand{\entropyof}[1]{\Entropy\left(#1\right)}
\newcommand{\etransport}[2]{\prescript{\text{e}}{} {\mathbb U} _ {#1} ^ {#2}}
\newcommand{\euler}{\mathrm{e}}
\newcommand{\expbundleat}[1]{S\maxexpat{#1}}
\newcommand{\expectat}[2]{\Expectation_{#1}\left[#2\right]}
\newcommand{\expfiberat}[2]{S_{#1}\maxexpat{#2}}
\newcommand{\logof}[1]{\log\left(#1\right)}
\newcommand{\maxexpat}[1]{\Maxexp\left(#1\right)}
\newcommand{\mixbundleat}[1]{\prescript{*}{}S\maxexpat{#1}}
\newcommand{\mixfiberat}[2]{\prescript{*}{}S_{#1}\maxexpat{#2}}
\newcommand{\mtransport}[2]{\prescript{\text{m}}{} {\mathbb U} _ {#1} ^ {#2}}
\newcommand{\reals}{\mathbb{R}}
\newcommand{\scalarat}[3]{\left\langle#2,#3\right\rangle_{#1}}
\newcommand{\setof}[2]{\left\{#1 \, \middle| \, #2 \right\}}
\newcommand{\varat}[2]{\Var_{#1}\left(#2\right)}
\newcommand{\velocity}[1]{\accentset{\star}{#1}}
\newtheorem{Proposition}{Proposition}
\begin{document}

\begin{abstract}
The non-parametric version of  Amari's dually affine Information Geometry provides a practical calculus to perform computations of interest in statistical machine learning. The method uses the notion of a statistical bundle, a mathematical structure that includes both probability densities and random variables to capture the spirit of Fisherian statistics. We focus on computations involving a constrained minimization of the Kullback-Leibler divergence. We show how to obtain neat and principled versions of known computation in applications such as mean-field approximation, adversarial generative models, and variational Bayes.
\end{abstract}

\maketitle

\tableofcontents

\section{Introduction and Notations}
\label{sec:short-recap}

{ Many modern Artificial Intelligence (AI) and machine learning (ML) algorithms are based on non-parametric statistical methods and optimization algorithms based on the minimization of a divergence measure between probability functions. In particular, one computes the gradient of a function defined on the probability simplex; then, the learning uses a gradient ascent technique. Such a basic approach is illustrated, for example, in the textbook \cite{efron|hastie:2016} (Ch.~18).

In most papers, ordinary convex calculus tools on the open probability simplex provide the relevant derivatives and gradients. The relation between the analytic computations and their statistical meaning is not exposed. This paper focuses on the derivative and gradient computations by providing the geometric framework called Information Geometry (IG). This geometry differs from the usual convex analysis because its devices have a direct statistical meaning. For example, the velocity of a one-dimensional parametric curve \mbox{$\theta \mapsto p(\theta)$} in the open probability simplex is defined to be the Fisher's score $\derivby \theta \log p(\theta)$ instead of the ordinary derivative $\derivby \theta p(\theta)$. Generally speaking, IG is a geometric interpretation of Fisherian inference (\cite{efron|hastie:2016}, Ch.~5).

Amari's Information Geometry (IG) \cite{amari:97isi,amari:1998natural,amari|nagaoka:2000} has been successfully applied to modern AI algorithms; see, for example, \cite{amari:2016}. Here, we use the non-parametric version of IG of \cite{pistone|sempi:95,chirco|pistone:2022}. This version is \emph{{non-parametric}} 
because the basic set of states is the open probability simplex, it is \emph{{affine}}, as it satisfies a generalization of the classical Weyl's axioms \cite{weyl:1952}. Moreover, it is \emph{{dually affine}} in the sense already defined in Amari contributions because the covariance bilinear operator appears as a duality pairing in the vector space of coordinates.

The specific applications we will consider as examples come from the literature in statistical ML, particularly those that involve the constrained minimization of the Kullback--Leibler divergence (KL-divergence). Indeed, our main result in \cref{sec:general} is a form of the total gradient of the KL-divergence as expressed in the dually affine geometry. Namely, we consider symmetric divergences \cite{lin:1991divergence}, generative adversarial networks \cite{goodfellow|pouget-abadie|mirza|xu|warde-farley|ozair|courville|bengio:2014}, mixed entropy and transport optimization \cite{amari|karakida|oizumi:2018INGE,peyre|cuturi:2019}, and variational Bayes \cite{kingma|welling:2022autoencodingvariationalbayes,khan|rue:2023}.

Non-parametric IG stands in general sample spaces and under various functional assumptions. One option, among many, is the use of Orlicz spaces \cite{musielak:1983}; see \cite{pistone|sempi:95,pistone:2018-igaia-iv}. In this paper, we are not interested in discussing the functional setup. Still, we are interested in presenting the peculiar affine calculus of positive probability functions on a finite state space $\Omega$ in a geometric language compatible with the infinite-dimensional theory \cite{lang:1995}. Such a calculus provides principled definitions of a curve's \emph{{velocity}}, the scalar field's \emph{{gradient}}, and the \emph{{gradient flow}}. 

\subsection{Prerequisites}

Below, we provide a schematic summary of the theory. For complete details, we refer to previous presentations in \cite{pistone:2020-NPCS,chirco|pistone:2022}.

Let $\Omega$ be a finite sample space.} We look at the open simplex as the \emph{{maximal exponential model}} denoted as $\maxexpat \Omega$. {In fact, we present every couple of positive probability functions on $\Omega$, say $p,q$, in the form inspired by Statistical Physics \cite{landau|lifshits:1980}:
  \begin{equation}
    \label{eq:maxexp}
    q = \euler^{v - \Psi} \cdot p \ , 
  \end{equation}
  where $p$ represents a ground state, $q$ is a perturbation of the ground state, $v$ is a random variable, $\Psi$ is a normalizing constant, and $\psi = \kappa_p(v) = \log \expectat p {\euler^v}$ is the cumulant function.
  
  The random variable $v$ depends on $p$ and $q$ up to a constant. If we specify $\expectat p v = 0$ in \cref{eq:maxexp}, then a straightforward computation gives
  \begin{gather*}
    \label{eq:K}
    v = \log \frac q p - \expectat p {\log \frac q p} \ , \quad q = \euler^{v - \kappa_p(v)} \cdot p \ , \\
    \label{eq:KL} \kappa_p(v) = \log \expectat p {\euler^v} = \expectat p {\log \frac p q} = \KL p q \ ,
  \end{gather*}
  where {$\KL p q$} is the KL-divergence. Regarding the entropy,
  \begin{equation*}
    \label{eq:entropy}
    \KL p q = \expectat p {\log \frac p q} = \expectat p {\log p} - \expectat p {\log q} = - \entropyof p + \entropyof{p,q} \ . 
  \end{equation*}

  If we specify $\expectat q v = 0$ in \cref{eq:maxexp}, an analogous computation gives
  \begin{gather*}
    \label{eq:K-}
    v = \log \frac q p - \expectat q {\log \frac q p} \ , \quad q = \euler^{v - \kappa_p(v)} \cdot p \ , \\
    \label{eq:KL-} \kappa_p(v) = \log \expectat p {\euler^v} = - \expectat q {\log \frac p q} = - \KL q p \ .
  \end{gather*}

A vector bundle is a collection of vector spaces, and each vector space is called a \emph{{fiber}} of the bundle. For example, the tangent bundle collects all tangent vectors at each point in differential geometry. In Fisher's statistics of the open probability simplex, one considers the vector space of all Fisher's scores of one-dimensional models through the probability function $q$. Inspired by this last example, we call the \emph{{statistical bundle}} the vector bundle $\expbundleat \Omega$ of all couples $(q,v)$ of a positive probability function $q$ and a $q$-centered random variable, $\expectat q v = 0$,
\begin{equation*}
    \expbundleat \Omega = \setof{(q,v)}{q \in \maxexpat{\Omega}, \expectat q v = 0} \ .
\end{equation*}

{Each} fiber $\expfiberat q \Omega$ is a Euclidean space for the covariance inner product $\scalarat q v w = \expectat q {vw}$. 

The covariance inner product is both a Riemannian metric and a duality pairing. The metric interpretation leads to the Riemannian version of IG. The duality pairing interpretation leads to our dually affine IG. Because of that, we want to distinguish between the fibers $\expfiberat p \Omega$ and the dual fibers $\mixfiberat p \Omega$. The first bundle is called \emph{{exponential bundle}}, while the second bundle is called \emph{{mixture bundle}}. We use the notation
  \begin{equation*}
   \mixfiberat p \Omega \times \expfiberat p \Omega \ni (v,w) \mapsto \scalarat p v w \ , \quad p \in \maxexpat \Omega \ .  
  \end{equation*}
  
{In} our setup, all the vector spaces of random variables are finite-dimensional; hence, the fibers $\expfiberat p \Omega$ and $\mixfiberat p \Omega$ are equal vector spaces. However, it is a useful distinction, as it will be apparent in the discussion of parallel transports below.} 

The definition of the statistical bundle aims to capture an essential mechanism of Fisher's approach to statistics (\cite{efron|hastie:2016}, Ch.~4). Suppose $t \mapsto q(t) \in \maxexpat \Omega$ is a one-dimensional statistical model. In that case, the Fisher's score is $t \mapsto \derivby t \log q(t) = \velocity q(t)$, and $t \mapsto (q(t),\velocity q(t)) \in \expbundleat \Omega$ { is the \emph{{lift}} of the curve to the statistical bundle}.  

Dually affine geometry follows from the definition of two parallel transports on the fibers and two affine charts. The parallel transports act between the fibers{
\begin{gather} 
\expfiberat q \Omega \ni v \mapsto \etransport q r v = v - \expectat r v \in \expfiberat r \Omega \quad \textit{{is the exponential transport,}} \label{eq:etransport} \\
 \mixfiberat q \Omega \ni w \mapsto \mtransport q r w = \frac q r v \in \mixfiberat r \Omega \quad \textit{is the mixture transport.} \label{eq:mtransport}
\end{gather}

It is easy to check that the transports are duals of each other:
\begin{gather}
  \scalarat q v {\etransport r q w} = \scalarat r {\mtransport q r v} {w} \quad \textit{is the transport's duality,}\label{eq:transport-duality} \\
  \scalarat q v w = \scalarat r {\mtransport q r v}{\etransport q r w} \quad \textit{is the inner product push.} \label{eq:inner-prod-push}
\end{gather}

The affine charts that define the two dual affine geometries by mapping the base set to a vector space of coordinates are
\begin{gather}
  q \mapsto s_p(q) = \log \frac q p - \expectat p {\log \frac q p} \in \expfiberat p \Omega \quad \textit{is the exponential chart} \label{eq:exp-chart} \\
  q \mapsto \eta_p(q) = \frac q p - 1 \in \mixfiberat p \Omega \quad
  \textit{is the mixture chart,} \label{eq:mix-chart}
\end{gather}}and the geometries defined by the two atlases are affine because the parallelogram law holds in both cases:
\begin{gather*}
  s_p(q) + \etransport q p s_q(r) = s_p(r) \label{eq:chasles-exp} \\
  \eta_p(q) + \mtransport q p \eta_q(r) = \eta_p(r) \label{eq:chasles-mix}
\end{gather*}

The inverse of the exponential chart is a non-parametric exponential family (\cite{efron|hastie:2016}, Ch.~5), and the known mechanisms of the cumulant function provide a fundamental calculus tool~\cite{brown:86}. {If $K_p$ is the restriction of $\kappa_p$ to $\expfiberat p \Omega$ and $v = s_p(q)$ then $q = s_p^{-1}(v) = e_p(v) = \euler^{v - K_p(v)}\cdot p$,
\begin{gather}
K_p(v) = \log \expectat p {\euler^v} \quad \textit{cumulant function} \label{eq:Kp} \\
  \KL p {e_p(v)} = K_p(v) \quad
  \textit{cumulant function express the KL- divergence,} \label{eq:Kp-equals-Dpq}
  \\   
  dK_p(v)[h] = \expectat {e_p(v)} h \quad \textit{is the derivative of $K_p$ in the direction $h$,}  \label{eq:Kp-deriv} \\
  d^2K(v)[h,k] = \covat {e_p(v)} h k \quad \textit{is the second derivative of $K_p$ in the directions $h,k$.} \label{eq:Kp-2-deriv}
\end{gather}
\cref{eq:Kp-deriv,eq:Kp-2-deriv} are the non-parametric version of the well-known properties of the derivative of the cumulant function in exponential models; see (\cite{efron|hastie:2016}, \S~5.5) and \cite{brown:86}.}

We can now show that Fisher's score is a velocity in the technical sense of a velocity computed in the moving frame of both charts. If $t \mapsto q(t) \in \maxexpat \Omega$ is a smooth curve, and $\Phi \colon \maxexpat \Omega \to \reals$ is a smooth mapping,
\begin{gather}
  \velocity q(t) = \left. \derivby t s_p(q(t)) \right|_{p=q(t)} = \left. \derivby t \eta_p(q(t)) \right|_{p=q(t)} = \derivby t \log q(t) \quad \textit{is the velocity,} \label{eq:velocity} \\
  \derivby t \Phi(q(t)) = \scalarat {q(t)} {\Grad \Phi(q(t))} {\velocity q(t)} \quad \textit{is the gradient.} \label{eq:gradient}
\end{gather}

{{The} squared norm of the velocity \eqref{eq:velocity},
\begin{equation*}
    \scalarat{q(t)}{\velocity q(t)}{\velocity q(t)} = \expectat{q(t)}{\left(\derivby t \log q(t)\right)^2} = \sum \frac{{\dot q(t)}^2} {q(t)} \ ,
\end{equation*}
is the Fisher information that appeared first in the classical 
Cramer--Rao lower bound.}

The gradient defined in \cref{eq:gradient} is frequently called the \emph{{natural gradient}} in the IG literature, following the use introduced in the case of parametric models by Amari \cite{amari:1998natural}. {In Riemannian geometry \cite{docarmo:1992,lang:1995}, the metric acts as a duality pairing, and the definition of the gradient is similar to \cref{eq:gradient}. The classic example of the computation of the gradient is the gradient of the expected value as a function of the probability function,
\begin{equation*}
\derivby t \expectat {q(t)} u = \derivby t \sum u \frac {\dot q(t)} {q(t)} \, q(t) = \scalarat {q(t)} {u - \expectat {q(t)} u}{\velocity q(t)} \ ,
\end{equation*}
so that $\Grad \expectat q u = u - \expectat q u$.

The gradient of $\Phi$ gives the velocities of curves ``orthogonal'' to the surfaces of constant $\Phi$-value, that is, the curves of steepest ascent. The solutions of the equation $\Grad \Phi = 0$ are the stationary points of $\Phi$, and an equation of the form 
\begin{equation*}
    \velocity q(t) = \epsilon(t) \Grad \Phi(q(t))
\end{equation*}
is a \emph{{gradient flow equation}}.

In conclusion, we review the derivation of a function $f$ between two maximal exponential models using the mixture charts \cref{eq:mix-chart}. The expressions of $f$ and its derivative $df$ in the charts centered, respectively, at $p_1$ and $p_2$, are
\begin{equation*}
\begin{array}{lcr}
\begin{tikzcd}
      \maxexpat {\mu_1} \arrow[r,"f"] & \maxexpat {\mu_2} \arrow[d, "\eta_{p_2}"] \\
        \expfiberat {p_1} {\mu_1} \arrow[r,"f_{p_1 \cdot p_2}"'] \arrow[u,"n_{p_1}^{-1}" ] & \expfiberat {p_2} {\mu_2} 
\end{tikzcd}
&\text{and}&
\begin{tikzcd}[column sep=huge]
     \expfiberat {q_1} {\mu_1} \arrow[r,"df(q_1)"] & \expfiberat {f(q_1)} {\mu_2} \arrow[d, "\mtransport {f(p_1)} {q_2}"] \\
       \expfiberat {p_1} {\mu_1} \arrow[r,"df_{p_1 \cdot p_2}(\eta_{p_1}(q_1)"'] \arrow[u, "\mtransport {p_1} {q_1}"] & \expfiberat{p_2}{\mu_2}
\end{tikzcd}
\end{array}
\end{equation*}

It follows that the computation of the derivative from its expression is
\begin{equation} \label{eq:bundle-derivative} 
  df(q)[\velocity q] = \mtransport {p_2}{q} df_{p_1,p_2}(\eta_{p_1}(q))[\mtransport {q} {p_1} \velocity q] \ .
\end{equation}
}

{
\subsection{Summary of Content}

 In the following sections, we give both new results and new versions of the known results. The aim is to show the interest of the non-parametric dually affine IG in computing the gradient flow of a constrained KL-divergence. }
 
In \cref{sec:general}, we show how to use the statistical bundle formalism to compute derivatives of functions defined on the open probability simplex and how to compute natural gradients and total natural gradients of the KL-divergence, the cross entropy, the entropy, and the Jensen--Shannon divergence.

{In \cref{sec:product-sample-space}, we apply the general computations of the previous section to independence models and marginal conditional probabilities in a factorial product setting. The dually affine methodology methodically reproduces known computations and suggests neat variations of potential interest. {In particular, \cref{sec:variational-bayes} contains a fully worked example of the derivation of a gradient flow equation of interest in approximate Bayes~computations.}}

\section{Total Natural Gradient of the KL-Divergence}
\label{sec:general}

The KL-divergence (\cite{amari:2016}, Ch.~3) as a function of two variables is
\begin{equation} \label{eq:KLx2}
  D \colon   \maxexpat\Omega \times \maxexpat \Omega \ni (q,r) \mapsto \KL q r = \expectat q {\log \frac q r} \ . 
\end{equation}

{The} computation of the total derivative is well-known in Information Theory. However, we provide proof in the affine setting, expressing the result in the affine charts.

In the exponential chart at $p$ and in the mixture chart at $p$, the expressions of the probability functions $q$ and $r$ are, respectively,
\begin{equation} \label{eq:q-and-r-at-p}
    q = e_p(v) = \euler^{v - K_p(v)} \cdot p \ , \quad r = \eta_p^{-1}(w) = (1+w) \cdot p \ ,
  \end{equation}
  
{By plugging \eqref{eq:q-and-r-at-p} into \eqref{eq:KLx2} and using \cref{eq:Kp-deriv},} one sees that the expressions of the partial KL-divergences are, respectively,
{
  \begin{multline} 
  \KL {e_p(v)} r = \expectat {e_p(v)} {v  - K_p(v)} - \expectat {e_p(v)} {\log \frac r p} = \\
  \expectat {e_p(v)} {v  - K_p(v)} - \expectat {e_p(v)} {s_p(v)} - \KL p r = \\
  dK_p(v)[v] - K_p(v) - dK_p(v)[s_p(r)] - \KL p r \ , \label{eq:KL1-at-p}
\end{multline}
}and
\begin{equation}
  {\KL q {\eta_p^{-1}(w)} = \KL q p - \expectat q {\log(1+w)}} \label{eq:KL2-at-p} \ .
\end{equation}

{Notice that the peculiar choice of the charts in the combination exponential for the first variable and mixture for the second variable is inessential in the finite state space case because any other choice will produce the same final result in the computation of the total natural gradient. However, it is consistent with the dual affine setting, in which two connections exist between one space and its dual. However, the expression of the KL-divergence using the exponential chart in both variables is interesting because, in such a case, the resulting expression is equal to the Bregman divergence of the cumulant function~$K_p$,
\begin{equation*}
  \KL {e_p(v)}{e_p(w)} = K_p(w) - K_p(v) - dK_p(v)[w-v] \ ,
\end{equation*}
which, in turn, is the second remainder in the Taylor expansion. For example, one closed form is
\begin{equation*}
   \KL {e_p(v)}{e_p(w)} = \int_0^1\int_0^1 d^2K(v + st (w -v))[w-v,w-v] \, sds\,dt \ . 
  \end{equation*}

  If $q(s,t) = e_p(v + st (w -v)) \propto q^{1-st}r^{st}$, then by \cref{eq:Kp-2-deriv},
  \begin{equation*}
    \KL q r = \iint_0^1 \varat {q(s,t)} {\log \frac r q} \, s \, ds \, dt \ . 
  \end{equation*}
} 

\subsection{Total Natural Gradient of the KL-Divergence}
\label{sec:total-natur-grad}

{We compute our gradients in the duality induced on each fiber by the covariance; hence, the} \emph{{total natural gradient}} of the KL-divergence has two components implicitly defined~by
\begin{equation} \label{eq:total-nat-grad-KL}
    \derivby t \KL {q(t)} {r(t)} = \\ \scalarat {q(t)} {\velocity q(t)} {\Grad_1 \KL{q(t)}{r(t)}} + \scalarat {r(t)} {\Grad_2 \KL{q(t)}{r(t)}}{\velocity r(t)} \ ,
\end{equation}
{where $\Grad_1 \KL{q}{r}$ is a random variable in the fiber at $Q$, while $\Grad_2 \KL{q}{r}$ is a random variable in the fiber at $r$. The adjective total refers to the fact that $D$ is a function of two~variables.}
\begin{Proposition} \label{prop:KL-tot-nat-grad}
The total natural gradient of the KL-divergence is
  \begin{equation}
    \label{eq:KL-total-grad}
    (q,r) \mapsto \Grad \KL q r = \left(- s_q(r), -\eta_r(q)\right) \in \expbundleat \Omega \times \mixbundleat \Omega \ .
  \end{equation}
  
{That is, more explicitly, for each smooth couple of curves $t \mapsto q(t)$ and $t \mapsto r(t)$, {~\cref{eq:total-nat-grad-KL}~becomes}}
  \begin{equation*}
    \label{eq:KL-total-grad-dy}
    \derivby t \KL {q(t)} {r(t)} = - \scalarat {q(t)}  {\velocity q(t)} {s_{q(t)}(r(t))}  - \scalarat {r(t)} {\eta_{r(t)}(q(t))} {\velocity r(t)} \ .
    \end{equation*}    
\end{Proposition}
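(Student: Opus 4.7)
The plan is to use the definition \cref{eq:total-nat-grad-KL}, which already splits the total derivative into two pieces, each bilinear in a separate velocity. Accordingly, I will compute the two partial derivatives by taking curves that are constant in one argument, then read off the natural gradients by matching against the bilinear forms $\scalarat{\cdot}{\cdot}{\cdot}$. The two chart expressions \cref{eq:KL1-at-p,eq:KL2-at-p} of the KL-divergence are already tailored to this task: the first uses an exponential chart in the first argument (and so will deliver a gradient in the exponential fiber at $q$), and the second uses a mixture chart in the second argument (and so will deliver a gradient in the mixture fiber at $r$). Throughout, I will exploit \cref{eq:velocity}, which equates the chart-coordinate velocity with the Fisher score, provided the chart is centered at the base point of the curve.

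For the first component, fix $r$ and center the exponential chart at $q(t_0)$: set $p = q(t_0)$ and $q(t) = e_{q(t_0)}(v(t))$, so that $v(t_0) = 0$ and $v'(t_0) = \velocity q(t_0)$. Differentiate \cref{eq:KL1-at-p} at $t_0$ and evaluate \cref{eq:Kp-deriv,eq:Kp-2-deriv} at $v=0$. The contributions from $dK_{q(t_0)}(v)[v]$ and from $-K_{q(t_0)}(v)$ cancel because $\velocity q(t_0)$ is $q(t_0)$-centered; the surviving contribution from $-dK_{q(t_0)}(v)[s_{q(t_0)}(r)]$ equals $-\covat{q(t_0)}{\velocity q(t_0)}{s_{q(t_0)}(r)}$, which again by centering coincides with $-\scalarat{q(t_0)}{\velocity q(t_0)}{s_{q(t_0)}(r)}$. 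Comparing with \cref{eq:total-nat-grad-KL} identifies the first partial natural gradient as $-s_{q(t_0)}(r)$.

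For the second component, fix $q = q(t_0)$ and center the mixture chart at $r(t_0)$: set $p = r(t_0)$ and $r(t) = (1 + w(t)) \cdot r(t_0)$, so that $w(t_0) = 0$ and $w'(t_0) = \velocity r(t_0)$. Differentiating \cref{eq:KL2-at-p} at $t_0$ collapses the $\log(1+w)$ term and produces $-\expectat{q(t_0)}{\velocity r(t_0)}$. Expanding $\eta_{r(t_0)}(q(t_0)) = q(t_0)/r(t_0) - 1$ inside the pairing $\scalarat{r(t_0)}{\cdot}{\velocity r(t_0)}$ and using once more that $\velocity r(t_0)$ is $r(t_0)$-centered rewrites this quantity as $-\scalarat{r(t_0)}{\eta_{r(t_0)}(q(t_0))}{\velocity r(t_0)}$, identifying the second partial natural gradient as $-\eta_{r(t_0)}(q(t_0))$.

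No single step is a real obstacle: the whole computation reduces to two applications of the chain rule together with the cumulant identities \cref{eq:Kp-deriv,eq:Kp-2-deriv}, and the chart-centering trick makes every term that is not bilinear in the current velocity vanish. What does require attention is bookkeeping—matching the exponential-versus-mixture chart in each argument with the exponential-versus-mixture fiber in which the corresponding gradient lives—and remembering that \cref{eq:velocity} equates the coordinate derivative with the Fisher score only when the chart is centered at the base point, which is exactly why setting $p = q(t_0)$ in the first argument and $p = r(t_0)$ in the second streamlines the calculation.
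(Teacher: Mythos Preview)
Your proposal is correct and follows essentially the same approach as the paper: differentiate the chart expressions \cref{eq:KL1-at-p,eq:KL2-at-p} via the cumulant identities \cref{eq:Kp-deriv,eq:Kp-2-deriv} to read off the two partial gradients. The only difference is that the paper works at a generic chart center $p$ and pulls back with the transports $h=\etransport q p \velocity q$, $k=\mtransport r p \velocity r$, whereas you specialize $p=q(t_0)$ (resp.\ $p=r(t_0)$) so that the transports become trivial and the evaluation is at the origin of the chart; this is a harmless streamlining of the same computation.
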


\begin{proof}
{From \cref{eq:Kp-2-deriv}, the derivative at $v \in \expfiberat p \Omega$ of \cref{eq:KL1-at-p} in the direction $h = \etransport q p \velocity q$ is}
\begin{multline*}  \label{eq:deriv-KL1-at-p}
  d^2K_p(v)[v,h] - dK_p(v)[h] + dK_p(v)[h] - d^2K_p(v)[s_p(r),h] = \\
  \covat q {s_p(q) - s_p(r)} {h} = \covat q {\log \frac q r - \expectat p {\log \frac q r}} {h} = \\
  \expectat q {\left(\log \frac q r - \expectat q {\log \frac qr}\right)\left(h - \expectat q h\right)} = \scalarat q {-s_q(r)} {\velocity q} \ .
\end{multline*}

The derivative at $w \in \mixfiberat p \Omega$ {of \cref{eq:KL2-at-p}  in the direction $k = \mtransport r p \velocity r$ is}
\begin{equation*}  \label{eq:deriv-KL2-at-p}
- \expectat q {\frac p r k} = - \expectat r {\frac q r \velocity r} = \scalarat r {-\eta_r(q)}{\velocity r} \ .
\end{equation*}
\end{proof}

{The gradient computation forms the corresponding gradient flow equation, whose discretization provides basic optimization algorithms. Here are two basic examples.}

Given $r \in \maxexpat \Omega$, the solution of the gradient flow equation
\begin{equation*}
  \velocity q(t) = - \Grad_1 \KL {q(t)} r = s_q(r) \ , \quad q(0) = q_0 \ ,
\end{equation*}
is the exponential family
\begin{equation}
  \label{eq:exponential}
  t \mapsto q(t) = \euler^{\euler^{-t}v_0 - K_r(\euler^{-t}v_0)} \cdot r \ , \quad v_0 = s_r(q_0) \ .
\end{equation}

{In} fact, the LHS of \cref{eq:exponential} is given by
\begin{gather*}
  \label{eq:LHS-exponential}
  \log q(t) = \euler^{-t}v_0 - K_r(\euler^{-t}v_0) + \log r \ , \\
  \velocity q(t) = - \euler^{-t} v_0 + dK_r(\euler^{-t}v_0)[\euler^{-t} v_0] \ ,
\end{gather*}
while the RHS is 
\begin{multline*}s_{q(t)}(r) = - \euler^{-t}v_0 + K_r(\euler^{-t}v_0) - \expectat {q(t)} {- \euler^{-t}v_0 + K_r(\euler^{-t}v_0)} = \\
  - \euler^{-t}v_0 + \expectat {q(t)} {\euler^{-t}v_0} \ .
\end{multline*}

{The} conclusion follows from \cref{eq:Kp-deriv}.

Given $q \in \maxexpat \Omega$, the solution of the gradient flow equation
\begin{equation*}
  \velocity r(t) = - \Grad_2 \KL {q} {r(t)} = \eta_{r(t)}(q) \ , \quad r(0) = r_0 \ ,
\end{equation*}
is the mixture  family
\begin{equation}
  \label{eq:mixture}
  t \mapsto r(t) = \euler^{-t}r_0 + (1 - \euler^{-t}) q \ .
\end{equation}

{The} LHS of \cref{eq:mixture} is
\begin{equation*}
  \label{eq:LHS-mixture}
  \velocity r(t) = \frac {\dot r(t)} {r(t)} = \frac {- \euler^{-t} r_0 + \euler^{-t} q}{\euler^{-t}r_0 + (1 - \euler^{-t}) q} = \frac {q - r_0}{r_0 + (\euler^{t}-1) q} \ ,
\end{equation*}
while the RHS is
\begin{equation*}
  \label{eq:RHS-mixture}
  \eta_{r(t)}(q) = \frac q {\euler^{-t}r_0 + (1 - \euler^{-t}) q} - 1 = \frac
 {q - \euler^{-t}r_0 + (1 - \euler^{-t}) q} {\euler^{-t}r_0 + (1 - \euler^{-t}) q} = \frac {q - r_0}{r_0 + (\euler^{t}-1) q} \ . 
\end{equation*}

{ Notice that in both cases, the $t$ parameter appears in the solution in exponential form. Other forms of the temperature parameter will follow from a weighted form of the gradient flow equation.} 

\subsection{Natural Gradient of the Entropy and Total Natural Gradient of the Cross Entropy}

The KL-divergence equals the cross entropy minus the entropy,  
  \begin{equation*} \label{eq:kl-equals-cross-ent-minus-ent}
    \KL q r =  \expectat q {-\log r} - \expectat q {-\log q}= \entropyof {q,r} - \entropyof q \ . 
  \end{equation*}

In the exponential chart at $p$ for the first variable, the cross entropy is
  \begin{multline}
    \label{eq:cros-entropy-express-1}
    \entropyof{s^{-1}_p(v),r} = \\ \expectat {e_p(v)}{- \log r} = \expectat {e_p(v)} {- \log r - \expectat p {- \log r}} + \entropyof{p,r} = \\ dK_p(v)[- \log r - \expectat p {- \log r}] + \entropyof{p,r} \ ,
  \end{multline}
  with derivative at $v$ in the direction $h$
 \begin{multline*}
    \label{eq:cros-entropy-grad-1}
    d^2K_p(v)[- \log r - \expectat p {- \log r},h] = \covat q {- \log r} {h} = \\ \expectat q {\left(- \log r - \entropyof{q,r}\right) \left(h - \expectat q h\right)} = \scalarat q {\velocity q} {- \log r - \entropyof{q,r}} \ .
  \end{multline*}
 
  In the mixture chart at $p$ for the second variable
\begin{equation}
  \label{eq:cros-entropy-express-2}
  \entropyof{q,\eta^{-1}_p(w)} = \expectat q {- \logof{(1+w) \cdot p}} = \expectat q {- \logof{1+w}} + \entropyof{q,p} \ ,  
\end{equation}
with derivative at $w$ in the direction $k$,
\begin{equation*}
  \label{eq:cros-entropy-grad-2}
\expectat q {-(1+w)^{-1} k} = \expectat r {- \frac q r \mtransport p r k} = \expectat r {- \left( \frac q r -1\right) \velocity r} = \scalarat r {\eta_r(q)}{\velocity r} \ .   
\end{equation*}

\begin{Proposition}
The total natural gradient of the cross entropy is
\begin{equation*}
  \label{eq:cros-entropy-grad}
  \Grad \entropyof{q,r} = (-\log r - \entropyof{q,r}, \eta_r(q))
\end{equation*}
and the natural gradient of the entropy is  
\begin{equation}
  \label{eq:entropy-grad}
  \Grad \entropyof q = - \log q - \entropyof q \ .
\end{equation}
\end{Proposition}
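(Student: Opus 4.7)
The plan is to imitate the proof of \cref{prop:KL-tot-nat-grad}: collect the two partial derivative computations already laid out immediately before the statement into the total natural gradient of the cross entropy, and then recover the entropy gradient by specializing both arguments to the same curve.

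For the first component of $\Grad \entropyof{q,r}$, I would read off the derivation starting at \cref{eq:cros-entropy-express-1}: the expression in the exponential chart at $p$ is differentiated at $v = s_p(q)$ in the direction $h = \etransport q p \velocity q$ using \cref{eq:Kp-2-deriv}, producing the pairing $\scalarat q {\velocity q} {-\log r - \entropyof{q,r}}$. This identifies the first component as $-\log r - \entropyof{q,r}$, and I would verify it lies in $\expfiberat q \Omega$ since $\expectat q {-\log r - \entropyof{q,r}} = 0$ by the definition of cross entropy. Symmetrically, the second component follows from the mixture chart computation starting at \cref{eq:cros-entropy-express-2}, differentiated at $w = \eta_p(r)$ in the direction $k = \mtransport r p \velocity r$; this yields the pairing $\scalarat r {\eta_r(q)} {\velocity r}$, and $\eta_r(q)$ belongs to $\mixfiberat r \Omega$ because $\expectat r {q/r - 1} = 0$. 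Assembling these along a smooth curve $t \mapsto (q(t), r(t))$ gives the total gradient formula for $\entropyof{q,r}$.

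For the entropy I would exploit the identity $\entropyof q = \entropyof{q,q}$. Applied to the curve $t \mapsto (q(t), q(t))$ and using the cross entropy total gradient just established,
\[
\derivby t \entropyof{q(t)} = \scalarat{q(t)}{\velocity q(t)}{-\log q(t) - \entropyof{q(t)}} + \scalarat{q(t)}{\eta_{q(t)}(q(t))}{\velocity q(t)} \ .
\]
The second summand vanishes identically since $\eta_q(q) = q/q - 1 = 0$, leaving $\derivby t \entropyof{q(t)} = \scalarat{q(t)}{\velocity q(t)}{-\log q(t) - \entropyof{q(t)}}$, and hence $\Grad \entropyof q = -\log q - \entropyof q$ as in \cref{eq:entropy-grad}.

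The calculations are essentially routine collections of the chart computations already performed before the statement; the only conceptual point is the specialization step $r = q$, which is justified by the additive decomposition of the total derivative of a smooth two-variable function into its partials against independent velocities. I do not anticipate any real obstacle. The most error-prone part is the bookkeeping for the chart centers and for the directions transported via $\etransport q p$ and $\mtransport r p$, but the centering checks for both gradient components provide a built-in consistency test.
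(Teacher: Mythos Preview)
Your treatment of the cross entropy total gradient is exactly the paper's: both simply collect the two chart computations displayed immediately before the statement.

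For the entropy gradient you take a different route. The paper uses the decomposition $\entropyof q = \entropyof{q,r} - \KL q r$ and subtracts the first component of $\Grad \KL q r$ from \cref{prop:KL-tot-nat-grad}, obtaining
\[
\Grad \entropyof q = -\log r - \entropyof{q,r} + s_q(r) = -\log q - \entropyof q
\]
after simplifying $s_q(r)$. Your approach instead restricts the cross entropy to the diagonal via $\entropyof q = \entropyof{q,q}$ and observes that the second partial contribution vanishes because $\eta_q(q)=0$. Both arguments are correct. Yours is more self-contained, since it does not invoke \cref{prop:KL-tot-nat-grad}, and the vanishing of $\eta_q(q)$ makes the specialization transparent; the paper's version, on the other hand, exhibits explicitly the consistency among the gradients of entropy, cross entropy, and KL-divergence, which is thematically useful in a section devoted to their interrelations.
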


\begin{proof}
  The first statement follows from Equations \eqref{eq:cros-entropy-express-1} and \eqref{eq:cros-entropy-express-2}.
From the decomposition $\entropyof q = \entropyof{q,r} - \KL q r$, we find the gradient of the entropy,
\begin{equation*}
  \Grad \entropyof q = - \log r - \entropyof{q,r} + s_q(r) = - \log q - \entropyof q \ .
\end{equation*}
\end{proof}

\subsection{Total Natural Gradient of the Jensen--Shannon Divergence}
\label{sec:jens-shan-diverg}
The \emph{{Jensen--Shannon}} divergence \cite{lin:1991divergence} is
\begin{multline}
  \label{eq:JSD}
  \JS q r = \frac12 \KL q {\frac12(q+r)} + \frac12 \KL r {\frac12(q+r)}  = \\
  \entropyof{\frac12(q+r)} - \frac12 \entropyof q - \frac12 \entropyof r \ .
\end{multline}

{It} is the minimum value of the function
\begin{equation*}
    \phi \colon p \mapsto \frac12 \left(\KL q p + \KL r p \right)
 \ .
 \end{equation*}

{In} fact,
\begin{equation*}
 \Grad \phi(p) = - \frac12 \left(\eta_p(q) + \eta_p(r) \right) = \frac12 \left(-\frac q p + 1 - \frac r p + 1 \right)= - \frac {\frac12(q+r)}{p} + 1   
\end{equation*}
which vanishes for $p = \frac12(q+r)$.

Let us compute the derivative of $f \colon q \mapsto \frac12(q+r)$. The mixture expression of $f$ at $p$ according to \cref{eq:mix-chart} is the affine function
\begin{equation*}
  \label{eq:translation-expression}
  f_p(v) = \eta_p \circ f \circ \eta^{-1}_p(v) = \frac{\frac12(1+v) \cdot p + r}p - 1 = \frac12 v + \frac12 \eta_p(r) \ ,
\end{equation*}
so that the derivative in the direction $h$ is $df_p(v)[h] = h/2$.

The push-back, according to the mixture transport \cref{eq:mtransport}, is
\begin{equation}
  \label{eq:translation-derivative}
  df(q)[\velocity q] = \mtransport p {\frac12(q+r)} \frac12 \mtransport  q p \velocity q = \frac p {\frac12(q+r)} \frac 12 \frac q p \velocity q = \frac12\frac q {\frac12(q+r)} \velocity q = \frac 12 \mtransport q {\frac12(q+r)} \velocity q \ . 
\end{equation}

We now compute the gradient of $q \mapsto \JS q r$ of \cref{eq:JSD}, using the total natural gradient of the KH-divergence of \cref{prop:KL-tot-nat-grad}, the derivative \cref{eq:translation-derivative}, and the duality of parallel transports \cref{eq:transport-duality}:
\vspace{-6pt}
 \begin{multline*}
   \Grad (q \mapsto \JS q r) = \\
   \frac12 \left( - s_q\left(\frac12(q+r)\right) - \frac 12 \etransport {\frac12(q+r)} q \eta_{\frac12(q+r)}(q)\right) + \frac 12 \left(- \frac 12 \etransport {\frac12(q+r)} q \eta_{\frac12(q+r)}(r) \right) = \\
   \frac12 \left( - s_q\left(\frac12(q+r)\right) - \frac 12 \etransport {\frac12(q+r)} q \left(\eta_{\frac12(q+r)}(q) + \eta_{\frac12(q+r)}(r)\right) \right) = \\
   \frac12 \left( - s_q\left(\frac12(q+r)\right) - \frac 12 \etransport {\frac12(q+r)} q \left(\frac{q}{\frac12(q+r)}-1 + \frac{r}{\frac12(q+r)}-1 \right) \right) = \\  - \frac12 s_q\left(\frac12(q+r)\right) \ .  
 \end{multline*}

It is also instructive to use the expression of the Jensen--Shannon divergence in terms of entropies. From \cref{eq:entropy-grad},
\vspace{-6pt}
\begin{multline*}
  \Grad (q \mapsto \JS q r) = \\
  \frac12 \etransport {\frac12(q+r)} q \left( - \log \frac12(q+r) - \entropyof{\frac12(q+r)} \right) - \frac12 \left( - \log q -  \entropyof q \right) = \\ -\frac12 \left(\log \frac12(q+r) - \expectat q {\log \frac12(q+r)} + \log q - \expectat q {\log q} \right) = \\ - \frac12 s_q\left(\frac12(q+r)\right) \ .   
\end{multline*}

\section{Product Sample Space} \label{sec:product-sample-space}

This section uses $\Omega = \Omega_1 \times \Omega_2$ as a factorial sample space. For each $r \in \maxexpat \Omega$, the margins are $r_1 \in \maxexpat {\Omega_1}$ and $r_2 \in \maxexpat{\Omega_2}$. {In the \emph{{mean-field assumption}}, the model equals the tensor product of the margins,
  \begin{equation*}
    \label{eq:product}
    \bar r = r_1 \otimes r_2 \in \maxexpat{\Omega_1} \otimes \maxexpat {\Omega_2} \subset \maxexpat \Omega \ .
  \end{equation*}
  
{The} velocities are, respectively,
  \begin{gather}
    \label{eq:vel-product-1}
    \velocity r(x,y;t) = \derivby t \log r(x,y;t) \ ,  \\  \label{eq:vel-product-2}
   \velocity {\bar r}(x,y;t) = \derivby t \log \bar r(x,y;t) = \velocity r_1(x;t) + \velocity r_2(y;t) \ .
 \end{gather}
 
 Below, we will discuss the optimality of a mean-field approximation.}

\subsection{Product Sample Space: Marginalization}
\label{sec:prod-sample-spac-margin}

The (first) marginalization is
\begin{equation}
    \label{eq:marginalization}
    \Pi_1 \colon \maxexpat{\Omega_1 \times \Omega_2} \ni r \mapsto r_1 \in \maxexpat{\Omega_1} \ , \quad r_1(x) = \sum_b r(x,b) \ .
  \end{equation}

 {{We will compute the bundle derivative of \cref{eq:marginalization} following the scheme of \cref{eq:bundle-derivative}.} }

\begin{Proposition} \label{prop:deriv-of-marginalization}
  The derivative $d\Pi_1$ of the marginalization \cref{eq:marginalization} is 
\begin{equation*}
  \label{eq:deriv-of-marginalization}
  d\Pi_1 \colon \mixbundleat \Omega \ni (r,\velocity r) \mapsto \left(r_1,\condexpat r {\velocity r}{\Pi_1}\right) \in \mixbundleat {\Omega_1}  \ . 
\end{equation*}
\end{Proposition}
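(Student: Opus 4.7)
I would follow the scheme \eqref{eq:bundle-derivative} explicitly announced before the statement. Fix a reference density $p \in \maxexpat{\Omega}$ and let $p_1 = \Pi_1(p)$. In the mixture chart $\eta_p$ on the source and $\eta_{p_1}$ on the target, a density $r$ is represented by $w = r/p - 1$, and $r_1 = \Pi_1(r)$ by $w_1 = r_1/p_1 - 1$. Summing the identity $r(x,b) = (1+w(x,b))p(x,b)$ over $b$ gives $w_1(x) = \sum_b w(x,b) p(x,b)/p_1(x) = \condexpat{p}{w}{\Pi_1}(x)$, so the chart expression $f_{p\cdot p_1}$ of $\Pi_1$ is the \emph{linear} operator $w \mapsto \condexpat{p}{w}{\Pi_1}$, whose differential at any point in any direction $h$ is just $\condexpat{p}{h}{\Pi_1}$ again.

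Next I would assemble the bundle derivative via \eqref{eq:bundle-derivative}: pull $\velocity r \in \mixfiberat{r}{\Omega}$ back to $p$ by the mixture transport, producing $\mtransport{r}{p}\velocity r = (r/p)\velocity r$; apply the chart differential, obtaining $\condexpat{p}{(r/p)\velocity r}{\Pi_1}(x) = \sum_b r(x,b)\velocity r(x,b)/p_1(x)$; finally push forward to $r_1$ by $\mtransport{p_1}{r_1}$, i.e.\ multiply by $p_1/r_1$. The factor $p_1(x)$ cancels cleanly, leaving $\sum_b r(x,b)\velocity r(x,b)/r_1(x) = \condexpat{r}{\velocity r}{\Pi_1}(x)$, as claimed. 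To close the argument I would check that the output lies in $\mixfiberat{r_1}{\Omega_1}$, i.e.\ $\expectat{r_1}{\condexpat{r}{\velocity r}{\Pi_1}} = \expectat{r}{\velocity r} = 0$, which is immediate by the tower property since $\velocity r \in \mixfiberat{r}{\Omega}$.

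The computation is essentially routine; the main point requiring care is the bookkeeping of the three distinct base densities $p$, $r$, $r_1$ through the transport chain, and the recognition that quotients of the form $p(x,b)/p_1(x)$ assemble into precisely the conditional probability defining $\condexpat{p}{\cdot}{\Pi_1}$. As a sanity check independent of charts, one may differentiate directly along a curve $t \mapsto r(t)$: since $\dot r_1(x;t) = \sum_b \dot r(x,b;t) = \sum_b \velocity r(x,b;t)\, r(x,b;t)$, dividing by $r_1(x;t)$ yields $\velocity{r_1}(x;t) = \condexpat{r(t)}{\velocity r(t)}{\Pi_1}(x)$, in agreement with the chart-based derivation and confirming that the final expression is independent of the auxiliary reference $p$.
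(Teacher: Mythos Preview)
Your proposal is correct and follows essentially the same route as the paper: express $\Pi_1$ in mixture charts, observe linearity, and push back via mixture transports using \eqref{eq:bundle-derivative}. The only differences are cosmetic---the paper fixes a \emph{product} reference $p_1 \otimes p_2$ (so the chart expression reads $v \mapsto \sum_b v(\cdot,b)\,p_2(b)$ rather than your $\condexpat{p}{\cdot}{\Pi_1}$ for a general $p$), and you add the direct curve-differentiation sanity check, which the paper omits.
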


\begin{proof}
  In the mixture chart centered at $p_1 \otimes p_2$ and $p_1$, respectively, the expression of the marginalization is
  \begin{multline}
    \label{eq:expression-marginalization}
\eta_{p_1} \circ \Pi_1 \circ \eta_{p_1 \otimes p_2}^{-1}(v) = \frac {\Pi_1 \circ \eta_{p_1 \otimes p_2}^{-1}(v)}{p_1} -1 = \\ \frac {\sum_b (1 + v(\cdot,b)) \cdot p_1 p_2(b)}{p_1} -1 = \sum_b v(\cdot,b) p_2(b) \ .     
\end{multline}

{Note that the expression in \cref{eq:expression-marginalization} is linear. Hence,} the derivative at $v$ in the direction $h$ is $x \mapsto \sum_b h(\cdot,b) p_2(b)$ with $h = \mtransport {r}{p_1 \otimes p_2} \velocity r$ so that the {bundle} derivative is
\begin{multline*}
  \label{eq:marginalization-derivative}
  d\Pi_1(r)[\velocity r] = \mtransport {p_1}{r_1} \sum_b \frac {r(\cdot,b)}{p_1p_2(b)} \velocity q(\cdot,b) p_2(b) = \\
 \frac {p_1} {r_1} \sum_b \frac {r(\cdot,b)}{p_1p_2(b)} \velocity q(\cdot,b) p_2(b) = \sum_b \velocity q(\cdot,b) r_{1|2}(\cdot|b) = \condexpat r {\velocity r} {\Pi_1} \ .
\end{multline*}
\end{proof}

{There is an interesting relation between conditional expectation and mixture transport.} The conditional expectation commutes with the mixture transports,
  \begin{equation*}
    \label{eq:zakai}
  \mtransport {r_1} {q_1} \condexpat r {\mtransport q r v} {\Pi_1} = \condexpat q v {\Pi_1} \ , \quad v \in \mixfiberat q {\Omega} \ .  
\end{equation*}

{It} is a way to express Bayes' theorem for conditional expectations. For all $\phi$,
\begin{multline*}
  \label{eq:zakai-proof}
  \expectat q {\mtransport {r_1} {q_1} \condexpat r {\mtransport q r v} {\Pi_1} \phi(\Pi_1)} = \expectat {q_1} {\frac{r_1}{q_1} \condexpat r {\frac q r v} {\Pi_1} \phi(\Pi_1)} = \\
  \expectat {r_1} {\condexpat r {\frac q r v} {\Pi_1} \phi(\Pi_1)} = 
  \expectat {r} {\condexpat r {\frac q r v} {\Pi_1} \phi(\Pi_1)} =
  \expectat r {\frac qr v \Phi(\Pi_1)} = \expectat q {v \phi(\Pi_1)} \ . 
\end{multline*}

\subsection{Product Sample Space: Mean-Field Approximation}
\label{sec:prod-sample-spac-mean-field}

{The derivative of the joint marginalization
\begin{equation*}
    \label{eq:mean-field}
    \Pi \colon \maxexpat{\Omega_1 \times \Omega_2} \ni r \mapsto r_1\Pi_1(r)  \otimes \Pi_2(r) \in \maxexpat{\Omega_1 \times \Omega_2} \ ,
  \end{equation*}
  follows from the derivative of the marginalization in \cref{eq:vel-product-2}.}
  
  \begin{Proposition} \label{prop:deriv-of-mean-field}
{The derivative $d\Pi$ of the joint marginalization in \emph{{Section}~\ref{sec:prod-sample-spac-mean-field}} is} 
\begin{equation}
  \label{eq:deriv-of-mean-field}
  d\Pi \colon (r,\velocity r) \mapsto \left(r_1 \otimes r_2,\condexpat r {\velocity r}{\Pi_1} + \condexpat r {\velocity r} {\Pi_2}\right) \ . 
\end{equation}
\end{Proposition}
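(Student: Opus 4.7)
The plan is to reduce the statement to the already-proved \cref{prop:deriv-of-marginalization} by exploiting the factorization $\Pi(r) = r_1 \otimes r_2$ together with the velocity identity for tensor product curves recorded in \cref{eq:vel-product-2}.

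First, given a smooth curve $t \mapsto r(t)$ in $\maxexpat \Omega$ with velocity $\velocity r(t)$, I consider the image curve $t \mapsto \Pi(r(t)) = r_1(t) \otimes r_2(t)$, where $r_i(t) = \Pi_i(r(t))$. Taking the logarithmic derivative factor by factor gives
\begin{equation*}
\derivby t \log\left(r_1(t)(x)\, r_2(t)(y)\right) = \velocity{r_1}(t)(x) + \velocity{r_2}(t)(y),
\end{equation*}
which, by the definition \cref{eq:velocity} of velocity, is the velocity of the image curve $\Pi(r(t))$ in the fiber $\mixfiberat{r_1(t) \otimes r_2(t)}{\Omega}$. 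By the bundle chain rule implicit in \cref{eq:bundle-derivative}, this same quantity equals $d\Pi(r(t))[\velocity r(t)]$.

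Second, I identify each marginal velocity using \cref{prop:deriv-of-marginalization}, which yields $\velocity{r_i}(t) = d\Pi_i(r(t))[\velocity r(t)] = \condexpat{r(t)}{\velocity r(t)}{\Pi_i}$ for $i = 1,2$. Substituting into the previous display gives formula \cref{eq:deriv-of-mean-field} directly, with the correct base point $r_1 \otimes r_2$ of the derivative already in place.

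The main obstacle, to my eye, is fiber bookkeeping. The random variable $\condexpat{r}{\velocity r}{\Pi_1}$ naturally lives in $\mixfiberat{r_1}{\Omega_1}$, so one must check that, lifted to a function of $(x,y)$ depending only on $x$, it sits in $\mixfiberat{r_1 \otimes r_2}{\Omega}$. This is immediate because $\expectat{r_1 \otimes r_2}{\varphi \circ \Pi_1} = \expectat{r_1}{\varphi}$ for any function $\varphi$ on $\Omega_1$, so $r_1$-centering transfers to $r_1 \otimes r_2$-centering; the analogous check works for the second summand. Since both summands are already centered at $r_1 \otimes r_2$, no further mixture transport correction is required in the bundle derivative formula.
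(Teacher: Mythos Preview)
Your proof is correct and follows essentially the same approach as the paper's own argument: factor $\Pi$ as the composition $r \mapsto (\Pi_1(r),\Pi_2(r)) \mapsto r_1 \otimes r_2$, apply \cref{prop:deriv-of-marginalization} to each marginalization, and use the velocity identity \cref{eq:vel-product-2} for the tensor-product step. The paper states this in a single line (``Compose the partial derivatives with the mapping $(\velocity r_1,\velocity r_2) \mapsto \velocity r_1 \otimes \velocity r_2$''), while you have spelled out the chain-rule computation and added the fiber-centering check, but the substance is the same.
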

{
\begin{proof}
Compose the partial derivatives with the mapping $(\velocity r_1,\velocity r_2) \mapsto \velocity r_1 \otimes \velocity r_2$    
\end{proof}}

{The decomposition of the velocity {$velocity\,r$} according to \cref{eq:deriv-of-mean-field} provides a better decomposition than $\velocity r_1 + \velocity r_2$ of \cref{eq:vel-product-1,eq:vel-product-2} and provides a definition of the \emph{{mean-field approximation}}.} In the language of ANOVA decomposition of statistical interactions, the derivative part in \cref{eq:deriv-of-mean-field} is the sum of the simple effects of the~velocity,
  \begin{equation*}
    \label{eq:ANOVA-of-velocity}
\velocity q = \velocity q_1 + \velocity q_2 + \velocity q_{12} \ , 
\end{equation*}
where $q_i = \condexpat q {\velocity q}{\Pi_i}$, $i=1,2$, and the last term is the interaction, the $q$-orthogonal residual. See \cite{pistone:2021gsi} for a discussion of the ANOVA decomposition in the context of the statistical~bundle. 

The equation for the total natural gradient of the KL-divergence and the computation of the derivative above provide the natural gradients of the divergence between the joint probability function and the mean-field approximation. In information theory \cite{cover|thomas:2006}, the KL-divergence in \cref{eq:divergence-mean-field-grad-2} is called mutual information. 

\begin{Proposition} \label{prop:divergence-mean-field-grad}
The natural gradients of the divergences of a joint distribution $r$ and its mean-field approximation $\Pi(r)$ are
  \begin{gather}
    \Grad \KL {\Pi(r)} r =  \condexpat {r_1 \otimes r_2} {s_r(r_1 \otimes r_2) } {\Pi_1} + \condexpat {r_1 \otimes r_2} {s_r(r_1 \otimes r_2) } {\Pi_2} - \eta_r(r_1 \otimes r_2)     \label{eq:divergence-mean-field-grad-1} \ .
\\
    \Grad \KL r {\Pi(r)} = - s_{r}(r_1 \otimes r_2) + \condexpat r {\eta_r(r_1 \otimes r_2)}{\Pi_1} + \condexpat r {\eta_r(r_1 \otimes r_2)} {\Pi_2} \ . 
    \label{eq:divergence-mean-field-grad-2}
  \end{gather}
\end{Proposition}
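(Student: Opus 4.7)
The plan is to apply the chain rule to the scalar functions $r \mapsto \KL{\Pi(r)}{r}$ and $r \mapsto \KL{r}{\Pi(r)}$, combining the total natural gradient of $D$ from \cref{prop:KL-tot-nat-grad} with the bundle derivative $d\Pi(r)[\velocity r] = \condexpat r {\velocity r}{\Pi_1} + \condexpat r {\velocity r}{\Pi_2}$ supplied by \cref{prop:deriv-of-mean-field}. Differentiating the first scalar along a smooth curve $t \mapsto r(t)$ yields
\begin{equation*}
\derivby t \KL{\Pi(r)}{r} = \scalarat{\Pi(r)}{d\Pi(r)[\velocity r]}{-s_{\Pi(r)}(r)} + \scalarat r {-\eta_r(\Pi(r))}{\velocity r} \ ,
\end{equation*}
and the second summand immediately identifies $-\eta_r(r_1 \otimes r_2)$ as the contribution to the gradient coming from the second slot of $D$.

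The main work is to push the first summand, which lives in the fiber at $\Pi(r) = r_1 \otimes r_2$, back to a pairing in the fiber at $r$. After substituting the formula for $d\Pi(r)[\velocity r]$ and unfolding the covariance as a plain expectation, I would apply the elementary adjoint identity
\begin{equation*}
\expectat{r_1 \otimes r_2}{w \cdot \condexpat r {\velocity r}{\Pi_i}} = \expectat r {\condexpat{r_1 \otimes r_2}{w}{\Pi_i} \cdot \velocity r} \ ,
\end{equation*}
which is verified by writing both sides as iterated sums over the factor spaces. Taking $w = -s_{r_1 \otimes r_2}(r)$ produces the conditional-expectation component of \cref{eq:divergence-mean-field-grad-1}, after noting that the stated form $\condexpat{r_1 \otimes r_2}{s_r(r_1 \otimes r_2)}{\Pi_i}$ differs from $\condexpat{r_1 \otimes r_2}{-s_{r_1 \otimes r_2}(r)}{\Pi_i}$ by the additive constant $\KL{r_1 \otimes r_2}{r} + \KL{r}{r_1 \otimes r_2}$, which is invisible under pairing with the $r$-centered velocity.

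\cref{eq:divergence-mean-field-grad-2} is proved by the symmetric argument: the first slot of $D$ contributes $-s_r(r_1 \otimes r_2)$ directly, while the second slot combines with $d\Pi$ and the same adjoint identity to produce conditional expectations of $-\eta_{r_1 \otimes r_2}(r)$. Here the paper's representative $\condexpat r {\eta_r(r_1 \otimes r_2)}{\Pi_i}$ coincides exactly with $\condexpat{r_1 \otimes r_2}{-\eta_{r_1 \otimes r_2}(r)}{\Pi_i}$: the pointwise identity $r \cdot \eta_r(r_1 \otimes r_2) = r_1 \otimes r_2 \cdot (-\eta_{r_1 \otimes r_2}(r))$ gives $\eta_r(r_1 \otimes r_2) = \mtransport{r_1 \otimes r_2}{r}(-\eta_{r_1 \otimes r_2}(r))$, and the commutation of conditional expectation with mixture transport recorded at the end of \cref{sec:prod-sample-spac-margin} closes the gap.

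The main obstacle is bookkeeping rather than anything conceptually deep: one must track which fiber (exponential or mixture) each object inhabits at each base point, identify the adjoint of conditional expectation between the $r$- and $r_1 \otimes r_2$-inner products, and reconcile the two natural representations of each conditional-expectation contribution (exactly on the mixture side via the transport, and only up to a centering constant on the exponential side, where the constant is the symmetric divergence and is killed by the $r$-centered velocity).
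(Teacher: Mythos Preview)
Your proposal is correct and follows the same line as the paper's proof: apply the chain rule via \cref{prop:KL-tot-nat-grad} and \cref{prop:deriv-of-mean-field}, then rewrite the $\Pi(r)$-fiber pairing as an $r$-fiber pairing by a change-of-measure identity for the conditional expectation. The only cosmetic difference is that the paper phrases this move through the inner-product push \eqref{eq:inner-prod-push}, which applies $\etransport{r_1\otimes r_2}{r}$ to $s_{r_1\otimes r_2}(r)$ and lands directly on $s_r(r_1\otimes r_2)$ (so no constant reconciliation is needed), whereas your direct adjoint identity produces $-s_{r_1\otimes r_2}(r)$ and you then adjust by the constant---your remark that the representative in \eqref{eq:divergence-mean-field-grad-1} is not itself $r$-centered is a subtlety the paper leaves implicit.
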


  The conditional terms in \cref{eq:divergence-mean-field-grad-1} depend on the mean-field model; hence, we could express them as a disintegration of $r$. For example,
  \vspace{-6pt}
  \begin{multline*}
    \condexpat {r_1 \otimes r_2} {s_r(r_1 \otimes r_2) } {\Pi_1 = x} = \\
    \sum_y r_2(y) \log \frac {r_1(x)r_2(y)}{r(x,y)} - \sum_{xy} r_1(x) r_2(y) \log \frac {r_1(x)r_2(y)}{r(x,y)} = \\
    \entropyof{r_2,r_{2|1}(\cdot|x)} - \sum_x r_1(x) \entropyof{r_2,r_{2|1}(\cdot|x)} \ ,    \end{multline*}
      where the last term is the conditional entropy $\entropyof{\Pi_2|\Pi_1}$ under $r$.
    
\begin{proof}[Proof of \cref{eq:divergence-mean-field-grad-1}] We find the natural gradient of $r \mapsto \KL {\Pi(r)} r$ by computing  with Equations~\eqref{eq:KL-total-grad} and \eqref{eq:deriv-of-mean-field} the variation along a smooth curve $t \mapsto r(t) \in \maxexpat{\Omega_1 \times \Omega_2}$ such that $r(0) = r$ and $\velocity r(0) = \velocity r$. It holds that 
  \begin{multline*}
  \left. \derivby t \KL {\Pi(r(t))}{r(t)} \right|_{t-0}=   - \scalarat {\Pi(r)} {s_{\Pi(r)}(r)}{d\Pi(r)[\velocity r]} - \scalarat r {\eta_r(\Pi(r))}{\velocity r} = \\
    - \scalarat {r_1 \otimes r_2} {s_{r_1 \otimes r_2}(r)}{\condexpat r {\velocity r}{\Pi_1} + \condexpat r {\velocity r} {\Pi_2}} - \scalarat r {\eta_r(r_1 \otimes r_2)}{\velocity r}
  \end{multline*}
 
 We want to present the first term of the RHS as an inner product at $r$ applied to $\velocity r$. Let us push the inner product from $r_1 \otimes r_2$ to $r$ with \cref{eq:inner-prod-push}. It holds that 
  \begin{multline*}
    \label{eq:12}
-  \scalarat {r_1 \otimes r_2} {s_{r_1 \otimes r_2}(r)}{(\condexpat r {\velocity r}{\Pi_1} + \condexpat r {\velocity r}{\Pi_2})} = \\ - \scalarat {r} {\etransport {r_1 \otimes r_2} r s_{r_1 \otimes r_2}(r)}{\mtransport {r_1 \otimes r_2} r (\condexpat r {\velocity r}{\Pi_1} + \condexpat r {\velocity r}{\Pi_2})} =  \\   
\scalarat {r} {s_r(r_1 \otimes r_2)}{\frac {r_1 \otimes r_2} r (\condexpat r {\velocity r}{\Pi_1} + \condexpat r {\velocity r}{\Pi_2})} = \\
\expectat {r} {s_r(r_1 \otimes r_2)\frac {r_1 \otimes r_2} r (\condexpat r {\velocity r}{\Pi_1} + \condexpat r {\velocity r}{\Pi_2})} =  \\
\expectat {r} {\left(\condexpat r {s_r(r_1 \otimes r_2)\frac {r_1 \otimes r_2} r} {\Pi_1} + \condexpat r {s_r(r_1 \otimes r_2)\frac {r_1 \otimes r_2} r} {\Pi_2} \right) \velocity r} = \\
\scalarat r {\condexpat {r_1 \otimes r_2} {s_r(r_1 \otimes r_2) } {\Pi_1} + \condexpat {r_1 \otimes r_2} {s_r(r_1 \otimes r_2) } {\Pi_2}} {\velocity r} \ .
\end{multline*}

The last equality follows from
\begin{equation*}
  \condexpat r {s_r(r_1 \otimes r_2)\frac {r_1 \otimes r_2} r} {\Pi_i} = \condexpat {r_1 \otimes r_2} {s_r(r_1 \otimes r_2) } {\Pi_i} \ , \quad i=1,2 \ . 
\end{equation*}
\end{proof}

\begin{proof}[Proof of \cref{eq:divergence-mean-field-grad-2}]
\begin{multline*}
\left. \derivby t \KL {r(t)}{\Pi(r(t))} \right|_{t=0} = - \scalarat {r} {s_{r}(r_1 \otimes r_2)} {\velocity r}  - \scalarat {r_1 \otimes r_2} {\eta_{r_1 \otimes r_2}(r)} {d\Pi(r)[\velocity r]} = \\
- \scalarat {r} {s_{r}(r_1 \otimes r_2)} {\velocity r}  - \scalarat {r_1 \otimes r_2} {\eta_{r_1 \otimes r_2}(r)} {\condexpat r {\velocity r} {\Pi_1} + \condexpat r {\velocity r} {\Pi_2}} \end{multline*}
and compute the second term as
\begin{multline*}
  - \scalarat {r_1 \otimes r_2} {\eta_{r_1 \otimes r_2}(r)} {\condexpat r {\velocity r} {\Pi_1} + \condexpat r {\velocity r} {\Pi_2}} = \\
  - \scalarat {r} {\mtransport {r_1 \otimes r_2} r \eta_{r_1 \otimes r_2}(r)} {\etransport {r_1 \otimes r_2} r \left(\condexpat r {\velocity r} {\Pi_1} + \condexpat r {\velocity r} {\Pi_2}\right)} = \\
\scalarat {r} {\eta_r(r_1 \otimes r_2)} {\condexpat r {\velocity r} {\Pi_1} + \condexpat r {\velocity r} {\Pi_2}} = \\
\expectat r {\eta_r(r_1 \otimes r_2) \left(\condexpat r {\velocity r} {\Pi_1} + \condexpat r {\velocity r} {\Pi_2}\right)} = \\
\expectat r {\left(\condexpat r {\eta_r(r_1 \otimes r_2)}{\Pi_1} + \condexpat r {\eta_r(r_1 \otimes r_2)} {\Pi_2}\right) \velocity r}
= \\
\scalarat r {\condexpat r {\eta_r(r_1 \otimes r_2)}{\Pi_1} + \condexpat r {\eta_r(r_1 \otimes r_2)} {\Pi_2}}{\velocity r} \ .
\end{multline*}
\Cref{eq:divergence-mean-field-grad-2} follows.
\end{proof}

 \subsection{Product Sample Space: Kantorovich and Schr\ "odinger}

If $\Pi$ denotes the {joint marginalization}, the set of \emph{transport plans} with margins $q_1$ and $q_2$~is 
\begin{equation*}
    \Gamma(q_1,q_2) = \Phi^{-1}(q_1 \otimes q_2) = \setof{q}{\Pi(q) = q_1 \otimes q_2} \ .
\end{equation*}

{{Here}, we deal with a classical topic with considerable literature}. We mention only the monograph of ref. \cite{peyre|cuturi:2019} and, from the Information Geometry perspective, ref. \cite{amari|karakida|oizumi:2018INGE,pistone:2021gsi}.

Let us consider first the Kantorovich problem. Given the cost function (i.e., potential~function) 
\begin{equation*}
U \colon \Omega \to \reals \ ,    
\end{equation*}
and a curve $t \mapsto q(t) \in \gamma(q_1,q_2)$, we want to minimize the cost
\begin{equation*}
  S(t) = \expectat {q(t)} U \ .
\end{equation*}

As $\expectat {q(t)}{ \phi(\Pi_i)} = \expectat {q_i} \phi$ for all $\phi$,
\begin{equation*}
0 = \derivby t \scalarat {q(t)}{U - \expectat {q(t)} U}{\velocity q(t)} \ ,  
\end{equation*}
so that $\condexpat {q(t)} {\velocity q(t) }{ \Pi_i} = 0$. The velocity of a curve in the transport plans is an interaction. {Now,} the derivative of the cost is
\begin{equation} \label{eq:deriv-mean}
\derivby t S(q(t)) = \scalarat {q(t)} {U - \expectat {q(t)} U}{\velocity q(t)} \ .
\end{equation}

{From the interaction property of $\velocity q(t)$, it follows that if the ANOVA decomposition}
\begin{equation*}
  U = \expectat {q} U + (u_1(\Pi_1;q) + u_2(\Pi_2;q)) + u_{12}(\Pi_1,\Pi_2;q)
\end{equation*}
holds, then
\begin{equation*}
  \derivby t S(q(t)) = \scalarat {q(t)} {u_{12}(\Pi_1,\Pi_2;q(t))}{\velocity q(t)} \ .
\end{equation*}

The Scr\" odinger problem is similar. Given the cost function (i.e., potential function) 
\begin{equation*}
U \colon \Omega \to \reals \ ,    
\end{equation*}
consider the exponential perturbation of the mean-field probability function 
\begin{equation}\label{eq:epsilon-temperature}
    \exp\left(-\frac U \epsilon - \phi(\epsilon)\right) \cdot q_1 \otimes q_2  \ .
\end{equation}

{The} parameter $\epsilon > 0$ is called \emph{{temperature}}, and the normalizing constant is
\begin{equation*}
    \psi(\epsilon) = \log \expectat {q_1 \otimes q_2} {\euler^{-U/\epsilon}} \ .
\end{equation*}

The KL-divergence of $q$ relative to the perturbed probability function of \cref{eq:epsilon-temperature}~is
\begin{multline*} \label{eq:S-problem}
    S_\epsilon(q) = \KL q {\euler^{-U/\epsilon - \phi(\epsilon)} \cdot (q_1 \otimes q_2) } = \\
    \expectat q {\log \frac q {\log q_1 \otimes q_2}} + \epsilon^{-1} \expectat {q} U + \psi(\epsilon) = \\
     \epsilon^{-1}\left(S(q) + \epsilon \KL q {q_1 \otimes q_2} + \epsilon \phi(\epsilon)\right) \ .
 \end{multline*}

The gradient of $q \mapsto S_\epsilon (q)$ is, from Equations~\eqref{eq:deriv-mean} and \eqref{eq:divergence-mean-field-grad-2},
\begin{multline*}
  \Grad S_\epsilon (q) = \\
  \epsilon^{-1}\left(U - \expectat q U - s_{q}(q_1 \otimes q_2) + \condexpat q {\eta_q(q_1 \otimes q_2)}{\Pi_1} + \condexpat q {\eta_q(q_1 \otimes q_2)} {\Pi_2} \right) \ .
\end{multline*}

Only the interaction part is relevant in the constrained problem $q \in \Gamma(q_1,q_2)$, and the interaction kills the two conditional expectations, which leaves
\begin{equation*}
    \left( U - \expectat q U \right))_{12;q} - \left(s_q(q_1 \otimes q_2)\right)_{12;q} \ .
    \end{equation*}

  We refer to \cite{pistone:2021gsi} for a method to compute the interaction part of a random variable. 

\subsection{Product Sample Space: Conditional Probability Function}

When the sample space is a product, $\Omega = \Omega_1 \times \Omega _2$, we can represent each probability function in the maximal exponential model via conditioning on one margin,
\begin{align}
  \maxexpat{\Omega_1 \times \Omega_2} &= \setof{q = q_{1|2} \cdot q_2}{q_{1|2}(\cdot|y) \in \maxexpat {\Omega_1}, y \in \Omega_2, q_2 \in \maxexpat{\Omega_2}} \\ 
  &= \setof{q = q_{2|1} \cdot q_1}{q_{2|1}(\cdot|x) \in \maxexpat {\Omega_2}, x \in \Omega_1, q_1 \in \maxexpat{\Omega_1}} \ .
\end{align}

The two representations are
\begin{equation} \label{eq:bayes}
   \maxexpat {\Omega_1}^{\otimes\Omega_2} \times \maxexpat {\Omega_2} \leftrightarrow \maxexpat{\Omega_1 \times \Omega_2} \leftrightarrow \maxexpat{\Omega_1} \times \maxexpat{\Omega_2}^{\otimes \Omega_1} \ .
\end{equation}

Following the approach of \cite{goodfellow|pouget-abadie|mirza|xu|warde-farley|ozair|courville|bengio:2014}, (\cite{amari:2016}, Ch.~11), we look at the transition mapping
\begin{equation*}
  \Omega_2 \ni y \mapsto q_{1|2}(\cdot|y) \in \maxexpat {\Omega_1}
\end{equation*}
as a family of probability functions representing alternative probability models. The other transition mapping
\begin{equation*} \label{eq:discriminator}
\Omega_1 \ni x \mapsto q_{2|1}(\cdot|x) \in \maxexpat{\Omega_2}
\end{equation*}
is the \emph{{discriminator}}, that is, $q_{2|1}(y|x)$ is the probability that the sample $x$ comes from $q_{1|2}(\cdot|y)$.

The right-to-left second mapping in \cref{eq:bayes}, 
\begin{equation}
  \label{eq:bayes-mapping}
  B \colon \maxexpat{\Omega_1} \times \maxexpat{\Omega_2}^{\otimes \Omega_1} \ni \left(q_1, q_{2|1}(\cdot|x), x \in \Omega_1 \right)
    \mapsto q = q_{2|1} \cdot q_1 \in \maxexpat{\Omega_1 \times \Omega_2} \ ,
  \end{equation}
maps the vector of the 1-margin and the set of alternative probability functions to the joint probability function. The kinematics of \cref{eq:bayes-mapping}, that is, the computation of velocities,~is
\begin{equation*}
  \velocity q(x,y;t) = \velocity q_1(x;t) + \velocity q_{2|1}(y|x;t) \ , 
\end{equation*}

{Hence}, the total derivative of $B$ is
\begin{equation*}
  \label{eq:bayes-mapping-deriv}
  dB(q_1,q_{2|1}(\cdot|x),x\in\Omega_1)[\velocity q_1,\velocity q_{2|1}(\cdot|x),x\in\Omega_1] \colon (x,y) \mapsto \velocity q_1(x) + \velocity q_{2|1}(y|x) \ .
\end{equation*}

The transposed total derivative is defined by
\begin{equation*}
  \scalarat {B(q_1,q_{2|1})} {v} {dB(q_1,q_{2|1})[\velocity q_1,\velocity q_{2|1}]} =
  \scalarat {(q_1,q_{2|1})} {dB(q_1,q_{2|1})^*[v]}{(\velocity q_1,\velocity q_{2|1})} \ ,
\end{equation*}
that is,
\begin{multline*}
  \sum_{x,y} v(x,y) \left(\velocity q_1(x) + \velocity q_{2|1}(y|x)\right)q_1(x) q_{2|1}(y|x) = \\
  \sum_{x,y} v(x,y) \velocity q_1(x) q_1(x) q_{2|1}(y|x) + \sum_{x,y} v(x,y) \velocity q_{2|1}(y|x) q_1(x) q_{2|1}(y|x) = \\
  \sum_x \left(\sum_y v(x,y) q_{2|1}(y|x)\right) \velocity q_1(x) q_1(x) + \sum_x \sum_y q_1(x) v(x,y)  \velocity q_{2|1}(y|x) q_{2|1}(y|x) = \\
  \scalarat {q_1} {\condexpat q v {\Pi_1}}{\velocity q_1} + \sum_x q_1(x) \sum_y v(x,y) \velocity q_{2|1}(y|x) q_{2|1}(y|x) = \\
  \scalarat {q_1} {\condexpat q v {\Pi_1}}{\velocity q_1} + \sum_x \scalarat { q_{2|1}(\cdot|x)} {q_1(x) \left(v(x,\cdot) - \expectat {q_{2|1}(\cdot|x)} {v(x,\cdot)} \right)} {\velocity q_{2|1}(y|x)} \ .
\end{multline*}

In conclusion, the transposed total derivative is
\begin{equation*}
  dB(q_1,q_{2|1})^*[v] \colon (x,y) \mapsto \left(\condexpat q v {\Pi_1},q_1(x) \left(v(x,\cdot) - \expectat {q_{2|1}(\cdot|x)} {v(x,\cdot)} \right),x \in \Omega_1\right) \ .
\end{equation*}

 It is interesting to derive $dB$ in the mixture atlas. The mixture expression of $B$ with respect to $(p_1,p_2^{\otimes \Omega_1})$ and $p_1 \otimes p_2$ is
  \begin{multline*}
     B_{p_1,p_2} \colon (v_1,v_{2|1}(\cdot|x),x \in \Omega_1) \mapsto \\
    (x,y) \mapsto (1+v_1(x)) p_1(x) (1+v_{2|1}(y|x)) p_2(y) \mapsto \\
    (1 + v_1)(1+v_{2|1}) - 1
  \end{multline*}
  and the total derivative in the directions $h_1$, $h_{2|1}$ is
  \begin{multline*}
    dB_{p_1,p_2}(v_1,v_{2|1}(\cdot|z),z \in \Omega_1)[h_1,h_{2|1}(\cdot|z),z \in \Omega_1] = \\
    (1+v_{2|1})h_1 + \sum_z (1+v_1(z))h_{2|1}(\cdot|z) \ .
  \end{multline*}

  {{The} push-back of the total derivative expression to the statistical bundles uses the equations}
  \begin{equation*}
    1+v_1 = q_1/p_1, 1+v_{2|1} = q_{2|1}/p_2, h_1=\mtransport {q_1} {p_1} \velocity q_1, h_{2|1}(\cdot | z) = \mtransport {q_{2|1}(\cdot|z)}{p_2} \velocity q_{2|1}(\cdot|z)
  \end{equation*}
  to obtain
  \begin{multline*}
    dB(q_1,q_{2|1}(\cdot|z),z \in \Omega_1)[\velocity q_1,\velocity q_{2|1}(\cdot|z),z \in \Omega_1] = \\
    \mtransport {p_1 \otimes p_2}{q_{2|1} \cdot q_1} \left(\frac{q_{2|1}}{p_2} \mtransport {q_1} {p_1} \velocity q_1 + \sum_z \frac{q_1(z)}{p_1(z)} \mtransport {q_{2|1}(\cdot|z)}{p_2} \velocity q_{2|1}(\cdot|z) \right) = \\
    \frac {p_1 \otimes p_2}{q_{2|1} \cdot q_1} \left(\frac{q_{2|1}}{p_2} \frac {q_1} {p_1} \velocity q_1 + \sum_z \frac{q_1(z)}{p_1(z)} \frac {q_{2|1}(\cdot|z)}{p_2} \velocity q_{2|1}(\cdot|z) \right)
  \end{multline*}
  
  In our affine language, we repeat computations in \cite{goodfellow|pouget-abadie|mirza|xu|warde-farley|ozair|courville|bengio:2014}. We especially derive the  natural gradient of a composite function by the equation
\begin{equation*}
  \Grad \Phi \circ B = dB^* \Grad \Phi (B) \ .
\end{equation*}

For a given target $p \in \maxexpat{\Omega_1 \times \Omega_2}$, we express the KL-divergence as a function of $B$ in \cref{eq:bayes-mapping},
\vspace{-6pt}
 \begin{multline*}
  K_q \colon (r_1,r_{2|1}(\cdot|x),x \in \Omega_1) \mapsto \KL p {B(q_1,q_{2|1}(\cdot|x),x \in \Omega_1)} = \\ \sum_{x,y} p(x,y) \log \frac {p(x,y)}{q_1(x)q_{2|1}(y|x)} \ .
\end{multline*}

We have
\begin{equation*}
  \Grad K_p(B(q_1,q_{2|1})) = - \eta_{q_1 \cdot q_{2|1}}(p) = 1 - \frac p {q_1 \cdot q_{2|1}}  \ .
\end{equation*}

The first component of $\Grad K_p \circ B$ is 
\begin{multline}
  \label{eq:GAN-1}
  [dB(q_1,q_{2|1})^*\Grad K_p (B(q_1,q_{2|1}))]_1 \colon x \mapsto \\ \sum_z \left(1 - \frac {p(x,z)} {q_1(x) q_{2|1}(z|x)}\right) q_{2|1}(z|x) = \sum_z \frac{q_1(x) q_{2|1}(z|x) - p(x,z)}{q_1(x)} = 1 - \frac {p_1(x)}{q_1(x)} \ , 
  \end{multline}
  so that $[\Grad K_p \circ B(q_1,q_{2|1})]_1 = - \eta_{q_1}(p_1)$.
  The $x$-component is
  \begin{multline}
    \label{eq:GAN-x}
    [dB(q_1,q_{2|1})^*\Grad K_p (B(q_1,q_{2|1}))]_x \colon y \mapsto \\
    q_1(x) \left(1 - \frac{p(x,y)}{q_1(x)q_{2|1}(y|x)}\right) - q_1(x) \sum_y  \left(1 - \frac{p(x,y)}{q_1(x)q_{2|1}(y|x)}\right) q_{2|1}(y|x) = \\
    - \frac {p(x,y)}{q_{2|1}(y|x)} + p_1(x) \ ,
  \end{multline}
  so that $ [dB(q_1,q_{2|1})^*\Grad K_p (B(q_1,q_{2|1}))]_x  = - p_1(x) \eta _{q_{2|1}(\cdot|x)}(p_{2|1}(\cdot|x))$. 
  
We now assume a target probability function $g \in \maxexpat{\Omega_1}$ and consider the probability function on the product sample space where all the model probability functions equal $g$, and the discriminator is uniform, say $p = \frac 1m g$, $m = \# \Omega_2$.  

In this case, \cref{eq:GAN-1} becomes
\begin{equation*}
 \Grad K_p \circ B(q_1,q_{2|1})]_1 = - \eta_{q_1}(g) 
\end{equation*}
and \cref{eq:GAN-x} becomes
\begin{equation*}
 \Grad K_p \circ B(q_1,q_{2|1})]_x = - q_1(x)\left(1 - \frac{g(x)/m}{q_1(x)q_{2|1}(y|x)}\right)
\end{equation*}

{
\subsection{Variational Bayes}
\label{sec:variational-bayes}

We revisit and develop some computations of (\cite{kingma|welling:2022autoencodingvariationalbayes}, \S~2.2). We keep the same notation as above so that Bayes' formula is
\begin{equation*}
  q_{2|1}(y|x) = \frac {q_{1|2}(x|y) q_2(y)} {q_1(x)} = \frac {q_{1|2}(x|y) q_2(y)} {\sum_y q_{1|2}(x|y) q_2(y)} \ ,
\end{equation*}
where $x$ is a sample value and $y$ is a latent variable value.

For a fixed  $x \in \Omega_1$, we look for a $r$ in some model $\mathcal M \subset  \maxexpat {\Omega_2}$ in order to approximate the conditional $q_{2|1}(\cdot|x)$. If $q \mapsto \mathcal L(r,x)$ satisfies 
\begin{equation*}
  \log q_1(x) = \KL {r} {q_{2|1}(\cdot|x)} + \mathcal L(r,x) \ ,
\end{equation*}
then
\begin{multline*}
  \mathcal L(r,x) = \log q_1(x) - \sum_y r(y) \log \frac {r(y)} {q_{2|1}(y|x)} = \\
  \sum_y r(y) \left(\log q_1(x) - \log \frac {r(y)} {q_{2|1}(y|x)}\right) = \sum_y r(y) \log \frac {q_{2|1}(y|x) q_1(x)}{r(y)} = \\
  \sum_y r(y) \log \frac {q_{12}(x,y)}{r(y)} = \sum _y r(y) \log \frac {q_2(y)} {r(y)} + \sum_y r(y) \log \frac {q_{12}(x,y)}{q_2(y)} = \\ - \KL {r} {q_2} + \sum_y r(y) \log q_{1|2}(x|y) \ .  
\end{multline*}

The so-called \emph{{variational lower bound}} follows from $\KL {r} {q_{2|1}(\cdot|x)} \geq 0$,
\begin{multline*}
  \log q_1(x) = \KL {r} {q_{2|1}(\cdot|x)} - \KL {r} {q_2} + \expectat r {\log q_{1|2}(x|\cdot)} \ge \\
 - \KL {r} {q_2}  + \expectat r {\log q_{1|2}(x|\cdot)} = \mathcal L(r;x) \ .
\end{multline*}
for all $r \in \mathcal M$. The bound is \emph{{exact}} because $r(y) = q_{2|1}(y|x)$ for all $y$ if, and only if, $\KL r {q_{2|1}(\cdot|x)} = 0$ 
The lower-bound variation along a curve $t \mapsto r(t) \in \mathcal M$ is
\begin{multline} \label{eq:derviv-of-L}
\derivby t \mathcal L(r(t);x) = \\ \scalarat {r(t)} {\velocity r(t)}{s_{r(t)}(q_2)} + \scalarat {r(t)} {\velocity r(t)} {\log q_{1|2}(x|\cdot) - \expectat {r(t)} {\log q_{1|2}(x|\cdot)}} = \\ \scalarat{r(t)}{\velocity r(t)}{\log \frac {q_2}{r(t)} + \log q_{1|2}(x|\cdot) - \expectat {r(t)} {\log \frac {q_2}{r} + \log q_{1|2}(x|\cdot)}} = \\
\scalarat{r(t)}{\velocity r(t)}{\log \frac {q_{12}(x,\cdot)}{r(t)} - \expectat {r(t)} {\log \frac {q_{12}(x,\cdot)}{r(t)}}} \ .
\end{multline}

If the model $\mathcal M$ is an exponential tilting of the margin $q_2$,
\begin{equation*}
    r = e_{q_2}(\theta \cdot u) = \euler^{\theta \cdot u - \psi(\theta)} \cdot q_2 \ ,
\end{equation*}
where $\theta \in \Theta \subset \reals^d$ is a vector parameter, and $u$ is the vector of sufficient statistics of the exponential family with $\expectat {q_2} u = 0$, then the velocity  in \cref{eq:derviv-of-L} becomes
\begin{equation*}\velocity r(t) = \dot \theta(t) \cdot (u - \nabla \psi(\theta(t)) = \dot \theta(t) \cdot (u - \expectat {r(t)} u) = \dot \theta(t) \cdot \etransport {q_2} {r(t)} u \ ,
\end{equation*}
and the gradient in \cref{eq:derviv-of-L} becomes
\begin{multline*}
\log \frac {q_{12}(x,\cdot)}{r(t)} - \expectat {r(t)} {\log \frac {q_{12}(x,\cdot)}{r(t)}} = \\
\log \frac {q_{12}(x,\cdot)} {q_2} - (\theta(t) \cdot u - \psi(u)) - \expectat{r(t)}{\log \frac {q_{12}(x,\cdot)} {q_2} - (\theta(t) \cdot u - \psi(u))} = \\
\log q_{1|2}(x|\cdot) - \theta(t) \cdot u  - \expectat{r(t)}{\log q_{1|2}(x|\cdot) - \theta(t) \cdot u} = \\
\etransport {q_2} {r(t)} \left(\log q_{1|2}(x|\cdot) - \expectat{q_2}{\log q_{1|2}(x|\cdot)}- \theta(t) \cdot u\right)  \ .
\end{multline*}

{Using} repeatedly \cref{eq:Kp-2-deriv}, we find the derivative of the lower bound in \mbox{\cref{eq:derviv-of-L}},
\vspace{-12pt}
\begin{multline*} 
\derivby t \mathcal L(r(t);x)  = \\
\scalarat{r(t)}{\dot \theta(t) \cdot  \etransport {q_2} {r(t)} u}{\etransport {q_2} {r(t)} \left(\log q_{1|2}(x|\cdot) - \expectat{q_2}{\log q_{1|2}(x|\cdot)} - \theta(t) \cdot u\right)} = \\ 
\sum_{i=1}^d \dot \theta_i(t) \left(\covat{r(t)}{u_i}{\log q_{1|2}(x|\cdot)} - \sum_{j=1}^d \theta_j(t) \covat{r(t)}{u_i}{u_j}\right) = \\
\dot \theta(t) \cdot \left( \covat{r(t)}{u}{\log q_{1|2}(x|\cdot)} - \Hessian \psi(\theta(t)) \theta(t)\right)
\ .
\end{multline*}

In conclusion, the gradient flow equation for the maximization of the lower bound under the model $\mathcal M$ is
\begin{equation} \label{eq:gradient-flow-VB}
    \dot \theta = - \Hessian \psi(\theta(t)) \theta(t) + \covat{e_{q_2}(\theta(t) \cdot u)}{u}{\log q_{1|2}(x|\cdot)} \ .
\end{equation}

As a sanity check, assume that the model is exact for the given $x$,
\begin{equation*}
    q_{2|1}(y|x) = \euler^{\bar \theta \cdot u(y) - \psi(\bar \theta)}  \cdot q_2(y) \ .
\end{equation*}

{Hence,}
\begin{equation*}
    \log q_{1|2}(x|y) = \log \euler^{\bar \theta \cdot u(y) - \psi(\bar \theta)} q_2(y) \frac {q_1(x)} {q_2(y)} =
    \bar \theta \cdot u = \psi(\bar \theta) + \log q_1(x) \ ,
\end{equation*}
where the last two terms do not depend on $y$ so that
\begin{equation*}
    \covat{e_{q_2}(\theta \cdot u}{u}{\bar \theta \cdot u} = \Hessian \psi(\theta) \bar \theta
\end{equation*}
and \cref{eq:gradient-flow-VB} becomes
\begin{equation*}
    \dot \theta = - \Hessian \psi(\theta(t)) (\theta(t) - \bar \theta)\ .
\end{equation*}

The solution of the gradient flow Equation \eqref{eq:gradient-flow-VB} requires the ability to compute the covariance for the current model distribution. We do not discuss the numerical and simulation issues related to the implementation here.}

\section{Discussion}
\label{sec:conclusions}

In this paper, we have shown how the dually affine formalism for the open probability simplex provides a system of affine charts in which the statistical notion of Fisher's score becomes the moving-chart affine velocity, and the natural gradient becomes the gradient. The construction applies to standard computations of interest for statistical machine learning. In particular, we have discussed the neat form of the total gradient of the KL-divergence and its applications in a factorial sample space, such as mean-field approximation and Bayes computations.

This approach is helpful because the unique features of the Fisherian approach to statistics, such as Fisher's score, maximum likelihood, and Fisher's information, are formalized as an affine calculus so that all the statistical tools are available in this more extensive theory. Moreover, this setting potentially unifies the formalisms of Statistics, Optimal Transport, and Statistical Physics, examples being the affine modeling of Optimal Transport~\cite{ay:2024InformarionGeometry} and the second-order methods of optimization \cite{chirco|malago|pistone:2022}. 

{We have not considered the implementation of the formal gradient flow equation as a practical learning algorithm. Such further development is currently outside the scope of this piece of research. It would require the numerical analysis of the continuous equation and the search for sampling versions of the expectation operators. We hope this note will prompt further research. On the abstract side, topics worth studying seem to be the cases of continuous sample space as in \cite{pistone:2018-igaia-iv}, or Gaussian models as in \cite{malago|montrucchio|pistone:2018}.}
 
\bibliographystyle{amsplain}

\providecommand{\bysame}{\leavevmode\hbox to3em{\hrulefill}\thinspace}
\providecommand{\MR}{\relax\ifhmode\unskip\space\fi MR }
\providecommand{\MRhref}[2]{%
  \href{http://www.ams.org/mathscinet-getitem?mr=#1}{#2}
}
\providecommand{\href}[2]{#2}

\end{document}